\newtheorem{theorem}{Theorem}[section]
\newtheorem{conj}{Conjecture}[section]
\theoremstyle{definition}
\numberwithin{equation}{section}
\begin{document}

\title[short text for running head]{``Cubic Partitions''}
\title{On 3 and 9-regular cubic partitions}

\author{D. S. Gireesh$^1$,M. S. Mahadeva Naika$^2$ and Shivashankar C.$^1$}
\address{$^1$Department of Mathematics, M. S. Ramaiah University of Applied Sciences, Peenya, Bengaluru-560 058, Karnataka, India}
\address{$^2$Department of Mathematics, Bangalore University, Central College Campus, Bengaluru-560 001, Karnataka, India}

\email{$^1$gireeshdap@gmail.com; shankars224@gmail.com; $^2$msmnaika@rediffmail.com}
          
\begin{abstract}
Let $a_3(n)$ and $a_9(n)$ are 3 and 9-regular cubic partitions of $n$. In this paper, we find the infinite 
family of congruences modulo powers of 3 for $a_3(n)$ and $a_9(n)$ such as
\[a_3\left (3^{2\alpha}n+\frac{3^{2\alpha}-1}{4}\right )\equiv 0 \pmod{3^{\alpha}}\] and
\[a_9\left (3^{\alpha+1}n+3^{\alpha+1}-1\right )\equiv 0 \pmod{3^{\alpha+1}}.\]
\end{abstract}

\subjclass[2010]{05A17, 11P83}
\keywords{Partitions; 3 and 9-Regular Cubic Partitions; Congruence}

\maketitle

\section{Introduction}\label{intro}
A partition of a positive integer $n$ is a non-increasing sequence of positive integers whose sum is $n$. Let $p(n)$ denotes the number of partitions of $n$ and the generating function is
\[\sum\limits_{n\geq0}p(n)q^n=\frac1{f_1},\]
where, here and throughout the paper, we set
\[f_k=(q^k;q^k)_\infty=\prod\limits_{m=1}^{\infty}(1-q^{km}).\]
Chan \cite{HCC} studied the cubic partition function denoted by $a(n)$, whose generating function is
\[\sum\limits_{n\geq0}a(n)q^n=\frac1{f_1f_2}.\]
He found the generating function
\[\sum\limits_{n\geq0}a(3n+2)q^n=3\frac{f_3^3f_6^3}{f_1^4f_2^4}\]
which readily implies that
\[a(3n+2)\equiv 0\pmod{3}.\]
Chan \cite{HCC1} also established infinite family of congruence modulo powers of 3 for $a(n)$. For each $n, k\geq1$, Chan proved that
\begin{equation}
a(3^kn+c_k)\equiv0\pmod{3^{k+\delta(k)}},
\end{equation}
where $c_k$ is the reciprocal modulo $3^k$ of 8 and
\[\delta(k):=\begin{cases}
 \text{1 if $k$ is even}\\
\text{0 if $k$ is odd}.
\end{cases}
\]
Zhao and Zhong \cite{ZZ}  have studied cubic partition pairs denoted by $b(n)$ and the generating function satisfied by $b(n)$ is
\begin{equation}\label{deff122}
\sum\limits_{n\geq 0}b(n)q^n=\frac1{f_1^2f_2^2}.
\end{equation}
For each $n\geq 0$, they found the Ramanujan's type congruences
\begin{align*}
b(5n+4)&\equiv 0\pmod{5},\\
b(7n+i)&\equiv 0\pmod{7},\\
b(9n+7)&\equiv 0\pmod{9},
\end{align*}
where $i\in\{2, 3, 4, 6\}$.

Lin \cite{Lin} has also studied the cubic partition pairs and established  the following Ramanujan's type congruences modulo 27:
\begin{align}
b(27n+16)&\equiv 0\pmod{27},\label{LC1}\\
b(27n+25)&\equiv 0\pmod{27},\label{LC2}\\
b(81n+61)&\equiv 0\pmod{27}.\label{LC3}
\end{align}
He also proposed the following conjectures:
\begin{conj} For each $n\geq 0$,
	\begin{equation}\label{8161}
	b(81n+61)\equiv 0\pmod{81}.
	\end{equation}
\end{conj}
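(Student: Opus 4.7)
The plan is to sharpen Lin's mod-$27$ congruence (\ref{LC3}) to mod $81$ by performing four successive $3$-dissections of the generating function $1/(f_1^2f_2^2)$ and keeping careful control of the $3$-adic valuations at each stage.

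Since $81n+61=3(27n+20)+1$, $27n+20=3(9n+6)+2$, $9n+6=3(3n+2)$, and $3n+2=3n+2$, extracting the subsequence $b(81n+61)$ amounts to isolating the residues $1,2,0,2\pmod{3}$ in turn from the generating series for $b(n)$, $b(3n+1)$, $b(9n+7)$ and $b(27n+7)$. I would begin from a Ramanujan-type $3$-dissection of $1/(f_1f_2)$ expressing it as a sum of three eta quotients weighted by $1$, $q$ and $q^2$; squaring then produces the $3$-dissection of $\sum_{n\geq 0}b(n)q^n$. After each extraction the resulting series has to be re-expressed as an eta quotient in the variable $q$ rather than $q^3$, which can be done using the standard $3$-dissections of $f_1$, $f_2$ and their reciprocals, together with the elementary congruences $f_1^3\equiv f_3\pmod{3}$ and $f_1^9\equiv f_3^3\pmod{9}$.

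The decisive work happens at the fourth dissection. By (\ref{LC3}) the series for $b(27n+7)$ obtained after three dissections is already divisible by $27$ on the residue class $n\equiv 2\pmod{3}$, so the remaining task is to exhibit one extra factor of $3$ in the piece that survives. I would collect this surviving piece as an explicit eta quotient and, by one last $3$-dissection, display it as $81$ times an integer eta quotient, invoking Jacobi's triple product identity and Euler's pentagonal number theorem to simplify the residual factors.

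The main obstacle is the $3$-adic bookkeeping through the iteration: to end with a clean mod-$81$ congruence, each intermediate identity must be established to at least one higher power of $3$ than the next step nominally requires, so that no precision is lost when passing from one dissection to the next. Deriving a sufficiently sharp mod-$81$ refinement of the $3$-dissection of $1/(f_1^2f_2^2)$ is the core technical hurdle; once it is in hand, the remaining calculation is mechanical, and the final congruence can in principle be double-checked by verifying finitely many Fourier coefficients against Sturm's bound on $\Gamma_0(18)$.
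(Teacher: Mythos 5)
First, a point of orientation: the paper does not prove this statement at all. It is quoted as Lin's conjecture and immediately noted to have been settled elsewhere, by Gireesh and Naika \cite{GM} and by Chern \cite{Chern}; the present paper's own theorems concern $a_3(n)$ and $a_9(n)$, not $b(n)$. So there is no in-paper proof to match your argument against, only the machinery of Section 2, which is in fact the natural engine for this congruence: since $\sum b(n)q^n=1/(f_1^2f_2^2)$ by \eqref{deff122}, one writes the relevant generating functions in terms of $1/\zeta^2$ with $\zeta=f_1f_2/(qf_9f_{18})$, iterates the huffing operator $H$ via \eqref{hzi}, and extracts the power of $3$ from the valuation bound $\nu(m_{i,j})\geq 3j-i-1$ in \eqref{numij}. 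That is essentially what \cite{GM} does.

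Measured as a proof, your proposal has a genuine gap: the decisive step is asserted rather than carried out. You correctly decompose $81n+61$ through the residue chain $1,2,0,2$ and correctly observe that \eqref{LC3} already gives divisibility by $27$, but the entire content of the conjecture is the one additional factor of $3$ at the fourth dissection, and your text says only that you ``would collect this surviving piece as an explicit eta quotient and \ldots display it as $81$ times an integer eta quotient.'' Nothing in the proposal shows that this factor actually appears; indeed there is no a priori reason the surviving piece is exactly $81$ times a single eta quotient (the known proofs obtain a $\mathbb{Z}$-linear combination of quotients of the form $(f_9f_{18}/f_3f_6)^{k}$ whose coefficients are divisible by $81$ because of a $3$-adic bound on matrix entries, not because of Jacobi's triple product or the pentagonal number theorem, which play no role here). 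The preliminary step is also shakier than you suggest: the $3$-dissection of $1/(f_1f_2)$ coming from \eqref{f12} and \eqref{z31} does not split into ``three eta quotients'' but into three terms each involving $T=f_3^4f_6^4/(q^3f_9^4f_{18}^4)$ in the denominator, and squaring to reach $1/(f_1^2f_2^2)$ compounds this. The Sturm-bound remark could in principle rescue the argument as a finite verification, but it requires first exhibiting the mod-$81$ reduction as a holomorphic modular form of specified weight and level (e.g., after multiplying by a suitable eta power), which is again left undone. In short: the roadmap is plausible and points in the direction of the published proofs, but as written it restates the difficulty rather than resolving it.
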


\begin{conj}
	\begin{align}
	\sum\limits_{n\geq 0}b(81n+7)q^n&\equiv 9\frac{f_2f_3^2}{f_6}\pmod{81},\\
	\sum\limits_{n\geq 0}b(81n+34)q^n&\equiv 36\frac{f_1f_6^2}{f_3}\pmod{81}.
	\end{align}
\end{conj}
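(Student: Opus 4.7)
The plan is to establish both congruences via iterated 3-dissection of
\[
\sum_{n\geq 0} b(n)q^n = \frac{1}{f_1^2 f_2^2},
\]
working modulo $81$ throughout. Since $81 = 3^4$, extracting the arithmetic progressions $81n+7$ and $81n+34$ requires four successive 3-dissections. Writing $7$ and $34$ in base $3$ as $(0021)_3$ and $(1021)_3$ shows that the two progressions share the first three dissection steps---each peeling off one base-$3$ digit from the right---and differ only at the final one, where we extract residues $0$ and $1$ modulo $3$, respectively. Hence it suffices to produce a closed form for $\sum_{n\geq 0} b(27n+7)q^n$ modulo $81$ and then perform one last 3-dissection to obtain both parts in parallel.

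Concretely, I would proceed in four stages: first compute $\sum_{n\geq 0} b(3n+1)q^n \pmod{81}$ as an eta-quotient combination, using the standard 3-dissections of $1/f_1$ and $1/f_2$ in terms of $f_3, f_6, f_9, f_{18}$; then iterate to extract $\sum_{n\geq 0} b(9n+7)q^n$ and $\sum_{n\geq 0} b(27n+7)q^n$; and finally perform the fourth dissection to separate residues $0$ and $1$ modulo $3$, yielding the two claimed formulas simultaneously. At each iteration I would apply the congruences $f_k^{3^s} \equiv f_{3k}^{3^{s-1}} \pmod{3^s}$ (in particular $f_1^3 \equiv f_3 \pmod 3$, $f_1^9 \equiv f_3^3 \pmod 9$, $f_1^{27} \equiv f_3^9 \pmod{27}$) to collapse high powers of eta-products as soon as they appear, keeping the intermediate expressions short modulo $81$.

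The main obstacle is combinatorial bookkeeping. Each 3-dissection triples the number of summands, so naively dozens of eta-quotient pieces would need to be handled after four iterations, and one also has to locate or derive suitable Ramanujan-type dissection identities for expressions such as $f_1^a f_2^b$ with small $a,b$ in order to simplify along the way. I expect, as in Lin's proof of \eqref{LC3}, that a significant fraction of the resulting terms carry a pre-factor divisible by $3$ or $9$ and therefore vanish against the successively finer target moduli; after this collapse, matching the few surviving pieces against $9 f_2 f_3^2/f_6$ and $36 f_1 f_6^2/f_3$ modulo $81$ should reduce to a short explicit eta-quotient verification.
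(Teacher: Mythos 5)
First, a point of reference: the paper does not prove this statement. It is quoted verbatim as Lin's conjecture, and the authors simply record that proofs were given by Gireesh--Naika \cite{GM} and by Chern \cite{Chern}. So there is no in-paper argument to compare against, and your proposal has to stand on its own.

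As it stands it does not: it is a plan, not a proof. The base-$3$ bookkeeping is correct ($7=(0021)_3$, $34=(1021)_3$, so the two progressions share three dissection steps and split at the fourth), and iterated $3$-dissection modulo $81$ is indeed the kind of argument that establishes such results --- it is how Lin proved \eqref{LC1}--\eqref{LC3}. But every substantive step is deferred: you never write down the dissection identities you intend to use, never compute any of the intermediate series $\sum b(3n+1)q^n$, $\sum b(9n+7)q^n$, $\sum b(27n+7)q^n$ modulo $81$, and the final matching against $9f_2f_3^2/f_6$ and $36f_1f_6^2/f_3$ is stated as an expectation (``I expect \dots should reduce to a short explicit verification''). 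Since the whole content of the theorem is precisely that this four-fold computation closes up into those two specific eta-quotients with those two specific constants, a proof must actually exhibit the computation. There is also a structural inefficiency that would make your route much harder than necessary: dissecting $1/f_1$ and $1/f_2$ \emph{separately} and multiplying the results produces a product of two trisected series whose cross-terms must then be re-sorted by residue class. The natural tool --- the one underlying both this paper and the published proofs of the conjecture --- is Chan's dissection of the \emph{product}, identity \eqref{f12}, together with the rationalization \eqref{z31}, which gives $1/\zeta=(9+3\zeta+\zeta^2)/T$ and hence $1/\zeta^2=(9+3\zeta+\zeta^2)^2/T^2$ with $T$ a function of $q^3$; applying the huffing operator $H$ to this single expression is what makes the extraction of $b(3n+1)$, and each subsequent step, tractable. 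Finally, a caution on your simplification device: replacing $f_1^3$ by $f_3$ is only valid modulo $3$, so inside a term carrying a prefactor $3^s$ it preserves the congruence only modulo $3^{s+1}$, not modulo $81$; used carelessly at an early stage (where prefactors are still small) this would destroy exactly the mod-$81$ information the theorem is about.
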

Gireesh and Naika have conformed Lin's conjectures in \cite{GM} and Chern also proved Lin's conjectures in \cite{Chern}.

Motivated by the above results, in this paper, we study 3 and 9-regular cubic partitions, which are defined as follows:

\noindent$\bullet$ Let $a_3(n)$ denotes the number of 3-regular cubic partitions of $n$, whose generating function is
\begin{equation}\label{defa3}
\sum\limits_{n\geq 0}a_3(n)q^n=\frac{f_3f_6}{f_1f_2}.
\end{equation}
$\bullet$ Let $a_9(n)$ denotes the number of 9-regular cubic partitions of $n$, whose generating function is
\begin{equation}\label{defa9}
\sum\limits_{n\geq0}a_9(n)q^n=\frac{f_9f_{18}}{f_1f_2}.
\end{equation}

We will show that
\begin{equation}\label{3p31}
\sum\limits_{n\geq 0}a_3(3n+2)q^n=3\frac{f_3^3f_6^3}{f_1^3f_2^3}
\end{equation}
and
\begin{equation}\label{3a32}
\sum\limits_{n\geq 0}a_9(3n+2)q^n=3\frac{f_3^4f_6^4}{f_1^4f_2^4}.
\end{equation}
These are analogous to Ramanujan's most beautiful identities \cite[p. 239 and p. 243]{RAMAU}
\begin{equation}\label{Rama54}
\sum\limits_{n\geq 0}p(5n+4)q^n=5\frac{f_5^5}{f_1^6}
\end{equation}
and
\begin{equation}\label{Rama75}
\sum\limits_{n\geq 0}p(7n+5)q^n=7\frac{f_7^3}{f_1^4}+49q\frac{f_7^7}{f_1^8}.
\end{equation}
We also find the infinite family of congruences modulo powers of 3 for $a_3(n)$ and $a_9(n)$, which are stated in the following theorems:
\begin{theorem}\label{th1}
For each $\alpha \geq0$,
\begin{align}
a_3\left (3^{2\alpha}n+\frac{3^{2\alpha}-1}{4}\right )&\equiv 0 \pmod{3^{\alpha}},\label{3ac1}\\
a_3\left (3^{2\alpha+1}n+\frac{3^{2\alpha+2}-1}{4}\right )&\equiv 0 \pmod{3^{\alpha+1}},\label{3ac2}\\
a_3\left (3^{2\alpha+2}n+\frac{7\times 3^{2\alpha+1}-1}{4}\right )&\equiv 0 \pmod{3^{\alpha+2}},\label{3ac3}\\
a_3\left (3^{2\alpha+2}n+\frac{11\times 3^{2\alpha+1}-1}{4}\right )&\equiv 0 \pmod{3^{\alpha+2}}.\label{3ac4}
\end{align}
\end{theorem}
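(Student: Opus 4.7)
The plan is to prove (\ref{3ac1})--(\ref{3ac4}) by a single strong induction on $\alpha$, bootstrapped from (\ref{3p31}). The base case $\alpha=0$ of (\ref{3ac2}), namely $a_3(3n+2)\equiv 0\pmod 3$, is immediate from (\ref{3p31}), which I would first establish by a $3$-dissection of $\frac{f_3f_6}{f_1f_2}$ using known $3$-dissections of $1/f_1$ and $1/f_2$. To organize the induction I would introduce
\[A_\alpha(q):=\sum_{n\ge 0}a_3\!\left(3^{2\alpha}n+\tfrac{3^{2\alpha}-1}{4}\right)q^n,\qquad B_\alpha(q):=\sum_{n\ge 0}a_3\!\left(3^{2\alpha+1}n+\tfrac{3^{2\alpha+2}-1}{4}\right)q^n,\]
and prove by induction on $\alpha$ that both admit explicit closed-form expressions as $\mathbb{Z}$-linear combinations of a small fixed collection of eta-quotients in $f_1,f_2,f_3,f_6$, with the coefficients of $A_\alpha$ divisible by $3^\alpha$ and those of $B_\alpha$ divisible by $3^{\alpha+1}$.

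The arithmetic linking the four congruences to this induction is purely combinatorial. Writing the $3$-dissection $A_\alpha(q)=\sum_{k=0}^{2}q^{k}A_\alpha^{(k)}(q^3)$, the $k$th piece encodes $a_3$ evaluated at $3^{2\alpha+1}n+\tfrac{3^{2\alpha}-1}{4}+k\cdot 3^{2\alpha}$; the value $k=2$ produces exactly $\tfrac{3^{2\alpha+2}-1}{4}$, so $B_\alpha$ is read off as the $k=2$ piece of $A_\alpha$'s dissection, yielding (\ref{3ac2}). Dissecting $B_\alpha(q)$ similarly into its three residue pieces produces $a_3$ at the arguments $\tfrac{3^{2(\alpha+1)}-1}{4}$, $\tfrac{7\cdot 3^{2\alpha+1}-1}{4}$, and $\tfrac{11\cdot 3^{2\alpha+1}-1}{4}$, which are precisely the arguments of (\ref{3ac1}) at level $\alpha+1$, of (\ref{3ac3}), and of (\ref{3ac4}) respectively. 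Thus all four families are simultaneously controlled once the $3$-adic shape of $A_\alpha$ and $B_\alpha$ is pinned down along the chain $A_\alpha\to B_\alpha\to A_{\alpha+1}$.

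The analytic engine is a pair of eta-quotient $3$-dissection identities, derived via Ramanujan's cube-roots-of-unity substitutions (equivalently, via the Borwein cubic theta functions and the modular relation between $a(q)$ and $a(q^3)$), applied to the generating function $\frac{f_3^3f_6^3}{f_1^3f_2^3}$ from (\ref{3p31}) and to the companion quotients that arise after one iteration. The key structural fact I would prove is that a suitable finite collection of eta-quotients in $f_1,f_2,f_3,f_6$ spans a $\mathbb{Z}$-module stable under the $U_3$ operator, with off-diagonal matrix entries divisible by $3$; each application of $U_3$ in a non-principal residue class then contributes an extra factor of $3$ to the modulus. The main obstacle will be identifying the correct finite-dimensional space and verifying its $U_3$-stability with the precise $3$-adic divisibility pattern on the matrix entries. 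Once this recursion is in hand, the $3^\alpha$ in (\ref{3ac1}) accumulates as $\alpha$ copies of the off-diagonal factor of $3$ along the chain $A_0\to B_0\to A_1\to\cdots\to A_\alpha$, and (\ref{3ac2})--(\ref{3ac4}) come out as the non-principal pieces of the corresponding dissections.
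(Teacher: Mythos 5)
Your overall architecture --- inducting along the chain $A_0\to B_0\to A_1\to\cdots$, reading \eqref{3ac2} off the $k=2$ residue class of $A_\alpha$ and reading \eqref{3ac1} (at level $\alpha+1$), \eqref{3ac3}, \eqref{3ac4} off the $k=0,1,2$ classes of $B_\alpha$, with powers of $3$ tracked through the matrix of the $U_3$ operator acting on a module of eta-quotients --- is exactly the paper's Watson/Hirschhorn--Hunt style argument, and your arithmetic of the progressions checks out. The gap is in the central technical device. There is no finite-dimensional $U_3$-stable $\mathbb{Z}$-module of eta-quotients in $f_1,f_2,f_3,f_6$ containing the iterates: already $A_1$ is a combination of $q^{i-1}(f_3f_6/f_1f_2)^{4i-3}$ for $i=1,2,3$, and each further application of $U_3$ roughly triples the top power, so the relevant module is the infinite one spanned by all powers of $\zeta^{-1}=qf_9f_{18}/(f_1f_2)$. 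The paper controls the $U_3$-action on this infinite family via Chan's modular equation for the cubic continued fraction, which gives $\zeta^3+3\zeta^2+9\zeta=T$ with $T=f_3^4f_6^4/(q^3f_9^4f_{18}^4)$ and hence the three-term recursion \eqref{mij} for the infinite matrix $M$ in \eqref{matrix}. Your substitute hypothesis ``off-diagonal entries divisible by $3$'' is also too weak to close the induction on an infinite matrix: what is needed, and what the recursion \eqref{mij} delivers, is the quantitative bound $\nu(m_{i,j})\ge 3j-i-1$, i.e.\ entries far above the diagonal are divisible by correspondingly high powers of $3$; without this the contributions of the high powers $T^{-j}$ are uncontrolled and the valuation estimates \eqref{nux1}--\eqref{nux2} cannot be propagated.

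A second, smaller gap: the slogan ``each $U_3$-extraction in a non-principal residue class gains a factor of $3$'' needs two different justifications, and you supply neither. For the step $A_\alpha\to B_\alpha$ the gain comes from the specific fact that the leading matrix entry is $m_{1,1}=3$, i.e.\ $H(\zeta^{-1})=3/T$. For \eqref{3ac3}--\eqref{3ac4} the gain does not come from the matrix at all: one first shows $B_\alpha\equiv 3^{\alpha+1}f_3^3f_6^3/(f_1^3f_2^3)$ modulo a higher power of $3$, then invokes $f_1^3\equiv f_3\pmod 3$ to replace the leading term by $3^{\alpha+1}f_9f_{18}/(f_3f_6)$ modulo $3^{\alpha+2}$, a series in $q^3$ whose $q^{3n+1}$ and $q^{3n+2}$ coefficients therefore vanish modulo $3^{\alpha+2}$. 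That last step is an extra idea your sketch does not contain.
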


\begin{theorem}\label{th2}
For each $\alpha \geq0$,
\begin{equation}\label{9ac}
a_9\left (3^{\alpha+1}n+3^{\alpha+1}-1\right )\equiv 0 \pmod{3^{\alpha+1}}.
\end{equation}
\end{theorem}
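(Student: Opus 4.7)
The plan is to prove Theorem \ref{th2} by induction on $\alpha$, taking the identity \eqref{3a32} as both the base case and the engine of the induction. Noting that $3^{\alpha+2}-1 = 3(3^{\alpha+1}-1)+2$, if we set
$$F_\alpha(q) := \sum_{n\geq 0} a_9\bigl(3^{\alpha+1}n + 3^{\alpha+1}-1\bigr)\,q^n,$$
then $F_{\alpha+1}(q)$ is obtained from $F_\alpha(q)$ by retaining only the $q^{3n+2}$ terms and rescaling $q^3\mapsto q$; call this operation $U$. Identity \eqref{3a32}, which I would first establish by $3$-dissecting $1/(f_1 f_2)$ inside $f_9 f_{18}/(f_1 f_2)$ via Ramanujan's cubic theta function identities (and incorporating the fact that $f_9 f_{18}$ is already a series in $q^3$), gives $F_0(q) = 3\,f_3^4 f_6^4/(f_1^4 f_2^4)$, which yields the base case $\alpha=0$.

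For the inductive step, tracking only the $3$-adic valuation of $F_\alpha$ is not enough; I would aim to prove a stronger structural statement of the shape
$$F_\alpha(q) = 3^{\alpha+1}\sum_{j} x_{\alpha,j}(q)\,\Phi_j(q),$$
where $\{\Phi_j(q)\}$ is a small finite family of eta-quotients in $f_1,f_2,f_3,f_6$ (containing $f_3^4 f_6^4/(f_1^4 f_2^4)$) that is closed under $U$ in the sense that each $U(\Phi_j)$ is expressible as $3$ times a $\mathbb{Z}[[q]]$-linear combination of the $\Phi_i$. Applying $U$ termwise then produces $F_{\alpha+1}(q) = 3^{\alpha+2}\sum_j y_{\alpha,j}(q)\,\Phi_j(q)$ with $y_{\alpha,j}(q)\in\mathbb{Z}[[q]]$, carrying the induction forward.

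The crux of the argument is therefore the identification of the basis $\{\Phi_j\}$ and the verification of the key $3$-dissection identities
$$U(\Phi_j)(q) = 3\sum_i c_{ij}(q)\,\Phi_i(q), \qquad c_{ij}(q)\in\mathbb{Z}[[q]].$$
I expect this to be the main obstacle: the basis must be large enough to be closed under $U$ but small enough to be explicit, and each dissection identity typically requires delicate manipulations with Jacobi's triple product, the Ramanujan cubic parameters $a(q),b(q),c(q)$, and identities of the type used by Chan in his treatment of $a(3^k n+c_k)$. Once the basis and the action of $U$ on it are pinned down, the inductive step reduces to routine linear-algebraic bookkeeping modulo powers of $3$, and Theorem \ref{th2} follows.
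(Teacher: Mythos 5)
Your overall strategy---induct on $\alpha$ by iterating the operator $U$ that extracts the $q^{3n+2}$ terms, starting from \eqref{3a32}, and express each $F_\alpha$ in a family of eta-quotients whose behaviour under $U$ is controlled---is exactly the shape of the paper's argument: there the family is $\{q^{i-1}(f_3f_6/(f_1f_2))^{4i}\}_{i\ge1}$, and $U$ is the huffing operator $H$ applied to powers of $1/\zeta$ with $\zeta=f_1f_2/(qf_9f_{18})$, governed by the modular equation $\zeta^3+3\zeta^2+9\zeta=T$. But the closure property you posit---that each $U(\Phi_j)$ equals $3$ times an integral combination of the $\Phi_i$---is false for this family, and no uniform factor of $3$ is available. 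Concretely, $U\bigl(q\,(f_3f_6/(f_1f_2))^{8}\bigr)=\sum_{j=1}^{6}c_{2,j}\,q^{j-1}(f_3f_6/(f_1f_2))^{4j}$ with $c_{2,1}=m_{8,3}=2^3=8$, which is coprime to $3$ (see the matrix \eqref{matrix}). Since already $F_1$ involves the $8$th and $12$th powers, the second application of $U$ produces terms that gain no new factor of $3$, and your induction stalls after one step.

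What rescues the argument is not uniform divisibility but a graded bound: the recursion $m_{i,j}=9m_{i-1,j-1}+3m_{i-2,j-1}+m_{i-3,j-1}$ (a consequence of $\zeta^3+3\zeta^2+9\zeta=T$) yields $\nu(c_{i,j})\ge 3j-i-1$ for the $3$-adic valuation $\nu$, and the inductive hypothesis must be strengthened to $\nu(y_{\alpha,i})\ge\alpha+1+3(i-1)$, i.e., the coefficient of the $i$th basis element carries roughly $3i$ surplus powers of $3$. Then $\nu(y_{\alpha,i})+\nu(c_{i,j})\ge\alpha+2+3(j-1)$ for every $i$, and the induction closes; the basis elements on which $U$ gains nothing are only ever multiplied by coefficients that already carry the deficit. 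Your proposal contains no mechanism for producing such a doubly-indexed estimate. Two further problems: the family cannot be finite, since $H$ sends the $4i$th power to powers as high as $12i$; and the coefficients $x_{\alpha,j}(q)$, $c_{ij}(q)$ cannot be allowed to range over $\mathbb{Z}[[q]]$, because $U$ is only $\mathbb{Z}[[q^3]]$-linear---$U\bigl(x(q)\Phi(q)\bigr)\neq x(q)\,U\bigl(\Phi(q)\bigr)$ in general---so the ``termwise'' application of $U$ in your inductive step is not legitimate. The paper avoids both issues by absorbing the powers of $q$ into an infinite basis and keeping the coefficients scalar.
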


The results \eqref{3ac1}--\eqref{9ac} are  analogous to Ramanujan's congruences modulo powers of 5 \cite{Hir}, for $\alpha\geq0$ and for $n\geq0$,
\begin{equation}
p\left (5^{2\alpha+1}n+\frac{19\times 5^{2\alpha+1}+1}{24}\right )\equiv 0 \pmod{5^{2\alpha+1}}
\end{equation}
and
\begin{equation}
p\left (5^{2\alpha+2}n+\frac{23\times 5^{2\alpha+2}+1}{24}\right )\equiv 0 \pmod{5^{2\alpha+2}}.
\end{equation}

\section{Preliminaries}
Due to Chan \cite{HCC}, we have the following identities
\begin{equation}\label{f12}
f_1f_2=f_9f_{18}\left(\frac1{x(q^3)}-q-2q^2x(q^3)\right)
\end{equation}
and
\begin{equation}
\frac{f_3^4f_6^4}{f_9^4f_{18}^4}=\frac1{x(q^3)^3}-7q^3-8q^6x(q^3)^3,
\end{equation}
where 
\[x(q):=q^{-1/3}\left({\cfrac{q^{1/3}}{1}}_+{\cfrac{q+q^2}{1}}_+{\cfrac{q^2+q^4}{1}}_+ \dots\right).\]

Now let
\begin{equation}\label{p3}
\zeta=\frac{f_1f_2}{qf_9f_{18}},\ \ \rho=\frac{1}{qx(q^3)},\ \ 
T=\frac{f_3^4f_6^4}{q^3f_9^4f_{18}^4}.
\end{equation}

Then, from \eqref{f12}--\eqref{p3},
\begin{equation}\label{z}
\zeta=\frac{f_1f_2}{qf_9f_{18}}=\rho-1-\frac{2}{\rho}
\end{equation}
and
\begin{equation}\label{T}
T=\rho^3-7-\frac{8}{\rho^3}.
\end{equation}
From \eqref{z} and \eqref{T}, we have
\begin{align}\label{z3}
\zeta^3&=\rho^3-3\rho^2-3\rho+11+\frac{6}{\rho}-\frac{12}{\rho^2}-\frac{8}{\rho^3}\nonumber\\
&=T+18-3\rho^2-3\rho+\frac{6}{\rho}-\frac{12}{\rho^2}\nonumber\\
&=T+9-3\zeta^2-9\rho+\frac{18}{\rho}\nonumber\\
&=T-9\zeta-3\zeta^2.
\end{align}

It follows from \eqref{z3} that
\begin{equation}\label{z31}
\zeta^3+3\zeta^2+9\zeta=T.
\end{equation}

We can write \eqref{z31}
\begin{equation}
\frac1{\zeta}=\frac1{T}(9+3\zeta+\zeta^2),
\end{equation}
so 
\begin{equation}\label{zi}
\frac1{\zeta^i}=\frac1{T}\left (\frac{9}{\zeta^{i-1}}+\frac{3}{\zeta^{i-2}}+\frac1{\zeta^{i-3}}\right ).
\end{equation}

Now let $H$ be the ``huffing'' operator modulo 3, that is,
\[H\left (\sum a_nq^n\right )=\sum a_{3n}q^{3n}.\]

If we apply $H$ to \eqref{zi}, we find
\begin{equation}\label{hzi}
H\left (\frac1{\zeta^i}\right )=\frac1T\left (9H\left (\frac1{\zeta^{i-1}}\right )
+3H\left (\frac1{\zeta^{i-2}}\right )+H\left (\frac1{\zeta^{i-3}}\right )\right ).
\end{equation}

Now,
\begin{eqnarray}
&&H\left (\zeta^2\right )=H\left (\rho^2-2\rho-3+\frac{4}{\rho}+\frac{4}{\rho^2}\right )=-3,\\
&&H\left (\zeta\right )=H\left (\rho-1-\frac{2}{\rho}\right )=-1,\\
&&H\left (1\right )=1.\label{h1}
\end{eqnarray}
From \eqref{hzi}--\eqref{h1}, we find
\begin{equation}
H\left (\frac1\zeta\right )=\frac{3}{T},\label{Hiz}
\end{equation}
\begin{equation}
H\left (\frac1{\zeta^2}\right )=\frac{2}{T}+\frac{3^3}{T^2},
\end{equation}
\begin{equation}
H\left (\frac1{\zeta^3}\right )=\frac1{T}+\frac{3^3}{T^2}+\frac{3^5}{T^3},
\end{equation}
and so on.

Indeed, for $i\ge1$ we can write
\begin{equation}
H\left (\frac1{\zeta^i}\right )=\sum_{j=1}^i\frac{m_{i,j}}{T^j},
\end{equation}
where the $m_{i,j}$ are defined in the following matrix.

The $m_{i,j}$ form a matrix $M$, the first nine rows of which are
\begin{equation}\label{matrix}
M=\left (\begin{matrix} 3&0&0&0&0&0&0&\cdots\\
2&3^3&0&0&0&0&0&\cdots\\
1&3^3&3^5&0&0&0&0&\cdots\\
0&2\cdot 3^2&2^2\cdot 3^4&3^7&0&0&0&\cdots\\
0&5&2\cdot 3^3\cdot 5&3^6\cdot 5&3^9&0&0&\cdots\\
0&1&2\cdot 3^2\cdot 7 &3^6\cdot 5&2\cdot 3^9&3^{11}&0&\cdots\\
0&0&2\cdot 3\cdot 7&2^2\cdot 3^4 \cdot 7&3^8\cdot 7&3^{10}\cdot 7&3^{13}&\cdots\\
0&0&2^3&2\cdot 3^3 \cdot 19&2^4\cdot3^7&2^2\cdot 3^9\cdot 7&2^3\cdot 3^{12}&\cdots\\
0&0&1&2^2\cdot 3^4&3^9&3^9\cdot 5^2&2^2\cdot 3^{13}&\cdots\\
\vdots&\vdots&\vdots&\vdots&\vdots&\vdots&\vdots
\end{matrix}\right )\\
\end{equation}
\begin{equation}\label{mij}
\text{and for}\  i\ge4,\ m_{i,1}=0,\ \text{and for}\ j\ge2,\ 
m_{i,j}=9m_{i-1,j-1}+3m_{i-2,j-1}+m_{i-3,j-1}.
\end{equation}
In fact $m_{4i-3,j}=0$ for $j\leq i-1$, so we can write
\begin{equation}\label{Hz1}
H\left (\frac1{\zeta^{4i-3}}\right )=\sum_{j=i}^{4i-3}\frac{m_{4i-3,j}}{T^j}
=\sum_{j=1}^{3i-2}\frac{m_{4i-3,i+j-1}}{T^{i+j-1}}
=\sum_{j=1}^{3i-2}\frac{a_{i,j}}{T^{i+j-1}},
\end{equation}
where 
\begin{equation}\label{aij}
a_{i,j}=m_{4i-3,i+j-1}.
\end{equation}

Similarly, $m_{4i-1,j}=0$ if $j\leq i-1$, so we can write
\begin{equation}\label{Hz2}
H\left (\frac1{\zeta^{4i-1}}\right )
=\sum_{j=i}^{4i-1}\frac{m_{4i-1,j}}{T^j}\\
=\sum_{j=1}^{3i}\frac{m_{4i-1,i+j-1}}{T^{i+j-1}}
=\sum_{j=1}^{3i}\frac{b_{i,j}}{T^{i+j-1}},
\end{equation}
where \begin{equation}\label{bij}
b_{i,j}=m_{4i-1,i+j-1}.
\end{equation}

And $m_{4i,j}=0$ if $j\leq i$, so we can write
\begin{equation}\label{Hz3}
H\left (\frac1{\zeta^{4i}}\right )
=\sum_{j=1+i}^{4i}\frac{m_{4i,j}}{T^j}\\
=\sum_{j=1}^{3i}\frac{m_{4i,i+j}}{T^{i+j}}
=\sum_{j=1}^{3i}\frac{c_{i,j}}{T^{i+j}},
\end{equation}
where 
\begin{equation}\label{cij}
c_{i,j}=m_{4i,i+j}.
\end{equation}

We can write \eqref{Hz1}
\begin{equation}
H\left (\left (q\frac{f_9f_{18}}{f_1f_2}\right )^{4i-3}\right )
=\sum_{j=1}^{3i-2}a_{i,j}\left (q^3\frac{f_9^4f_{18}^4}{f_3^4f_6^4}\right )^{i+j-1},
\end{equation}
and this can be rearranged to
\begin{equation}
H\left (q^{i-3}\left(\frac{f_3f_6}{f_1f_2}\right )^{4i-3}\right )
=\sum_{j=1}^{3i-2}a_{i,j}q^{3j-3}\left (\frac{f_9f_{18}}{f_3f_6}\right )^{4j-1}.
\end{equation}

The equation \eqref{Hz2} can be written as
\begin{equation}
H\left (\left (q\frac{f_9f_{18}}{f_1f_2}\right )^{4i-1}\right )
=\sum_{j=1}^{3i}b_{i,j}\left (q^3\frac{f_9^4f_{18}^4}{f_3^4f_6^4}\right )^{i+j-1},
\end{equation}
and this can be rearranged to
\begin{equation}
H\left (q^{i-1}\left(\frac{f_3f_6}{f_1f_2}\right )^{4i-1}\right )
=\sum_{j=1}^{3i}b_{i,j}q^{3j-3}\left (\frac{f_9f_{18}}{f_3f_6}\right )^{4j-3}.
\end{equation}

Similarly \eqref{Hz3} is
\begin{equation}
H\left (\left (q\frac{f_9f_{18}}{f_1f_2}\right )^{4i}\right )
=\sum_{j=1}^{3i}c_{i,j}\left (q^3\frac{f_9^4f_{18}^4}{f_3^4f_6^4}\right )^{i+j},
\end{equation}
and this can be rearranged to
\begin{equation}
H\left (q^i\left(\frac{f_3f_6}{f_1f_2}\right )^{4i}\right )
=\sum_{j=1}^{3i}c_{i,j}q^{3j}\left (\frac{f_9f_{18}}{f_3f_6}\right )^{4j}.
\end{equation}

\section{generating functions}
In this section, we found some generating functions which are useful in proving our main results.
\begin{theorem}
For each  $\alpha\geq 0$,
\begin{equation}\label{3p33}
\sum_{n\ge0}a_3\left(3^{2\alpha}n+\frac{3^{2\alpha}-1}{4}\right )q^n
=\sum_{i\geq1}x_{2\alpha,i}q^{i-1}\left(\frac{f_3f_6}{f_1f_2}\right)^{4i-3}
\end{equation}
and
\begin{equation}\label{3p34}
\sum_{n\ge0}a_3\left (3^{2\alpha+1}n+\frac{3^{2\alpha+2}-1}{4}\right )q^n
=\sum_{i\geq1}x_{2\alpha+1,i}q^{i-1}\left(\frac{f_3f_6}{f_1f_2}\right)^{4i-1},
\end{equation}
where the coefficient vectors ${\mathbf{x}}_{\alpha}=(x_{\alpha,1},x_{\alpha,2},\ \dots\ )$
are given by
\begin{equation}\label{x0}
\mathbf{x}_0=(x_{0,1},x_{0,2},x_{0,3},\ \dots\ )=(1,0,0,\ \dots\ ),
\end{equation}
and
\begin{eqnarray}
&&{\mathbf{x}}_{\alpha+1}={\mathbf{x}}_{\alpha}A\ \ \text{if}\ \alpha\ \text{is even},\\
&&{\mathbf{x}}_{\alpha+1}={\mathbf{x}}_{\alpha}B\ \ \text{if}\ \alpha\ \text{is odd},
\end{eqnarray}
where $A=(a_{i,j})_{i,j\ge1}$ and $B=(b_{i,j})_{i,j\ge1}$.
\end{theorem}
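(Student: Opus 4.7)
The plan is to proceed by induction on $\alpha$, alternating between \eqref{3p33} at the even level and \eqref{3p34} at the odd level. For the base case $\alpha = 0$, the choice $\mathbf{x}_0 = (1, 0, 0, \ldots)$ collapses the right-hand side of \eqref{3p33} to $f_3 f_6/(f_1 f_2)$, in agreement with the generating function \eqref{defa3}.

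For the inductive step I would first observe the two arithmetic identities
\[3^{2\alpha+1}n + \tfrac{3^{2\alpha+2}-1}{4} = 3^{2\alpha}(3n+2) + \tfrac{3^{2\alpha}-1}{4}, \qquad 3^{2\alpha+2}n + \tfrac{3^{2\alpha+2}-1}{4} = 3^{2\alpha+1}(3n) + \tfrac{3^{2\alpha+2}-1}{4}.\]
Writing $F(q) = \sum_{n\ge 0} a_3(3^{2\alpha}n + (3^{2\alpha}-1)/4)\,q^n$, the first identity shows that the transition to $\sum_{n\ge 0} a_3(3^{2\alpha+1}n + (3^{2\alpha+2}-1)/4)\,q^n$ is exactly the $3$-dissection of $F$ at residue $2$, implemented as $H(q^{-2} F(q))|_{q^3 \to q}$; the second identity shows that the subsequent passage from \eqref{3p34} at level $\alpha$ to \eqref{3p33} at level $\alpha+1$ is the dissection at residue $0$, namely $H(F(q))|_{q^3 \to q}$.

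Substituting the inductive hypothesis into these dissections and applying the already-established identities
\[H\!\left(q^{i-3}\left(\tfrac{f_3 f_6}{f_1 f_2}\right)^{4i-3}\right) = \sum_{j=1}^{3i-2} a_{i,j}\, q^{3j-3}\left(\tfrac{f_9 f_{18}}{f_3 f_6}\right)^{4j-1}, \quad H\!\left(q^{i-1}\left(\tfrac{f_3 f_6}{f_1 f_2}\right)^{4i-1}\right) = \sum_{j=1}^{3i} b_{i,j}\, q^{3j-3}\left(\tfrac{f_9 f_{18}}{f_3 f_6}\right)^{4j-3}\]
term by term and swapping the order of summation, the coefficient of each monomial on the right becomes $\sum_i x_{\alpha,i} a_{i,j} = (\mathbf{x}_\alpha A)_j$ in the even-to-odd step and $\sum_i x_{\alpha,i} b_{i,j} = (\mathbf{x}_\alpha B)_j$ in the odd-to-even step. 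The substitution $q^3 \to q$ sends $f_{3k}(q)$ to $f_k(q)$, so $f_9 f_{18}/f_3 f_6$ becomes $f_3 f_6/f_1 f_2$, yielding the closed forms claimed in \eqref{3p34} and \eqref{3p33} at the next level.

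The main technical obstacle I expect is the bookkeeping around the $q^{-2}$ shift in the even-to-odd step: verifying that the factor $q^{i-3}$ on the left-hand side of the first $H$-identity is exactly what arises from multiplying the inductive summand $q^{i-1}(f_3 f_6/f_1 f_2)^{4i-3}$ by $q^{-2}$; that the spurious negative $q$-powers in $q^{-2}F(q)$ are annihilated by $H$ before the substitution $q^3 \to q$ is performed; and that the row-support bounds $j \le 3i-2$ and $j \le 3i$ make $A$ and $B$ act on $\mathbf{x}_\alpha$ in a well-defined, column-finite manner so that the interchange of summations is legitimate.
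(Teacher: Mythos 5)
Your proposal is correct and follows essentially the same route as the paper: induction on $\alpha$ alternating between the even and odd cases, with the even-to-odd step realized as $H(q^{-2}F(q))$ and the odd-to-even step as $H(F(q))$, followed by the term-by-term application of the two $H$-identities from the preliminaries and the interchange of summations that produces $\mathbf{x}_{\alpha}A$ and $\mathbf{x}_{\alpha}B$. The technical points you flag (the $q^{-2}$ shift, annihilation of the non-multiple-of-3 exponents, and finiteness of the sums defining $\mathbf{x}_{\alpha+1}$, which follows from the finite support of $\mathbf{x}_\alpha$ together with the row-support bounds) are exactly the ones implicit in the paper's argument.
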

\begin{proof}
The identity \eqref{defa3} is the $\alpha=0$ case of \eqref{3p33}.\newline
Suppose \eqref{3p33} holds for some $\alpha\geq 0$. Then
\begin{equation}
\sum_{n\ge0}a_3\left(3^{2\alpha}n+\frac{3^{2\alpha}-1}{4}\right )q^n
=\sum_{i\geq1}x_{2\alpha,i}q^{i-1}\left(\frac{f_3f_6}{f_1f_2}\right)^{4i-3},
\end{equation}
which is equivalent to
\begin{equation}\label{3p39}
\sum_{n\ge0}a_3\left(3^{2\alpha}n+\frac{3^{2\alpha}-1}{4}\right )q^{n-2}
=\sum_{i\geq1}x_{2\alpha,i}q^{i-3}\left(\frac{f_3f_6}{f_1f_2}\right)^{4i-3}.
\end{equation}
Applying the operator $H$ to \eqref{3p39}, we find that
\begin{align*}
\sum_{n\ge0}a_3\left(3^{2\alpha}(3n+2)+\frac{3^{2\alpha}-1}{4}\right )q^{3n}
&=\sum_{i\geq1}x_{2\alpha,i}H\left(q^{i-3}\left(\frac{f_3f_6}{f_1f_2}\right)^{4i-3}\right)\\
&=\sum_{i\geq1}x_{2\alpha,i}\sum\limits_{j=1}^{3i-2}a_{i,j}q^{3j-3}\left(\frac{f_9f_{18}}{f_3f_6}\right)^{4j-1}\\
&=\sum\limits_{j\geq1}\left(\sum_{i\geq1}x_{2\alpha,i}a_{i,j}\right)q^{3j-3}\left(\frac{f_9f_{18}}{f_3f_6}\right)^{4j-1}\\
&=\sum\limits_{j\geq1}x_{2\alpha+1,j}q^{3j-3}\left(\frac{f_9f_{18}}{f_3f_6}\right)^{4j-1},
\end{align*}
which implies that
\begin{equation}
\sum_{n\ge0}a_3\left(3^{2\alpha+1}n+\frac{3^{2\alpha+2}-1}{4}\right )q^{n}=\sum\limits_{j\geq1}x_{2\alpha+1,j}q^{j-1}\left(\frac{f_3f_6}{f_1f_2}\right)^{4j-1},
\end{equation}
which is \eqref{3p34}.

Now suppose \eqref{3p34} holds for some $\alpha\geq0$. Then
\begin{equation}\label{3p310}
\sum_{n\ge0}a_3\left (3^{2\alpha+1}n+\frac{3^{2\alpha+2}-1}{4}\right )q^n
=\sum_{i\geq1}x_{2\alpha+1,i}q^{i-1}\left(\frac{f_3f_6}{f_1f_2}\right)^{4i-1}.
\end{equation}
Applying the operator $H$ to \eqref{3p310}, we find that
\begin{align*}
\sum_{n\ge0}a_3\left (3^{2\alpha+1}(3n)+\frac{3^{2\alpha+2}-1}{4}\right )q^{3n}
&=\sum_{i\geq1}x_{2\alpha+1,i}H\left(q^{i-1}\left(\frac{f_3f_6}{f_1f_2}\right)^{4i-1}\right)\\
&=\sum_{i\geq1}x_{2\alpha+1,i}\sum_{j=1}^{3i}b_{i,j}q^{3j-3}\left(\frac{f_9f_{18}}{f_3f_6}\right)^{4j-3}\\
&=\sum_{j\geq1}\left(\sum_{i\geq1}x_{2\alpha+1,i}b_{i,j}\right)q^{3j-3}\left(\frac{f_9f_{18}}{f_3f_6}\right)^{4j-3}\\
&=\sum_{j\geq1}x_{2\alpha+2,j}q^{3j-3}\left(\frac{f_9f_{18}}{f_3f_6}\right)^{4j-3}.
\end{align*}
After simplification, we obtain
\begin{equation}
\sum_{n\ge0}a_3\left (3^{2\alpha+2}n+\frac{3^{2\alpha+2}-1}{4}\right )q^{n}=\sum_{j\geq1}x_{2\alpha+2,j}q^{j-1}\left(\frac{f_3f_6}{f_1f_2}\right)^{4j-3},
\end{equation}
which is \eqref{3p33} with $\alpha+1$ in place of $\alpha$. This completes the proof of \eqref{3p33} and \eqref{3p34} by induction.
\end{proof}

\begin{theorem}For each $\alpha\geq0$,
\begin{equation}\label{3a9}
\sum_{n\ge0}a_9\left(3^{\alpha+1}n+3^{\alpha+1}-1\right )q^n
=\sum_{i\geq1}y_{\alpha,i}q^{i-1}\left(\frac{f_3f_6}{f_1f_2}\right)^{4i}
\end{equation}
where the coefficient vectors ${\mathbf{Y}}_{\alpha}=(y_{\alpha,1},y_{\alpha,2},\ \dots\ )$
are given by
\begin{equation}\label{y0}
\mathbf{Y}_0=(y_{0,1},y_{0,2},y_{0,3},\ \dots\ )=(3,0,0,\ \dots\ ),
\end{equation}
and
\begin{eqnarray}
{\mathbf{Y}}_{\alpha+1}={\mathbf{Y}}_{\alpha}C,
\end{eqnarray}
where $C=(c_{i,j})_{i,j\ge1}$.
\end{theorem}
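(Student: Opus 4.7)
The plan is to induct on $\alpha$, following the template of the preceding theorem for $a_3$, but using only the single recurrence governed by the matrix $C$ rather than the alternating pair $A$, $B$. The simplification arises because the exponent $4i$ appearing in \eqref{3a9} is always a multiple of $4$, so one application of the huffing operator $H$ returns an expression of the same shape and the recursion does not alternate.

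The base case $\alpha=0$ reduces the claim to
\[\sum_{n\geq 0}a_9(3n+2)q^n=3\left(\frac{f_3f_6}{f_1f_2}\right)^4,\]
which is exactly \eqref{3a32}, and this matches the stipulated $\mathbf{Y}_0=(3,0,0,\dots)$.

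For the inductive step, I would observe that
\[3^{\alpha+1}(3n+2)+3^{\alpha+1}-1=3^{\alpha+2}n+3^{\alpha+2}-1,\]
so passing from level $\alpha$ to level $\alpha+1$ amounts to extracting the arithmetic progression $m\equiv 2\pmod 3$ from the $\alpha$-level generating function. The standard recipe is to multiply the inductive hypothesis by $q^{-2}$, apply $H$ to both sides (which, on the left, selects precisely $m=3n+2$), pull the factor $q^{-3}$ outside $H$ on the right-hand side, and then invoke the preliminary identity
\[H\left(q^i\left(\frac{f_3f_6}{f_1f_2}\right)^{4i}\right)=\sum_{j=1}^{3i}c_{i,j}\,q^{3j}\left(\frac{f_9f_{18}}{f_3f_6}\right)^{4j}.\]
Interchanging the order of summation identifies the inner sum as $y_{\alpha+1,j}=\sum_{i\geq 1}y_{\alpha,i}c_{i,j}$, which is precisely $\mathbf{Y}_{\alpha+1}=\mathbf{Y}_\alpha C$. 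Replacing $q^3$ by $q$ converts $f_9f_{18}/(f_3f_6)$ into $f_3f_6/(f_1f_2)$ and yields \eqref{3a9} at level $\alpha+1$.

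I do not expect a genuinely hard step. Once \eqref{3a32} is on hand (a $3$-dissection analogous to Chan's identity for $a(3n+2)$, derivable from \eqref{f12} and \eqref{z31}), the argument is essentially half the length of the preceding proof, and the only bookkeeping worth double-checking is that the combined shift by $q^{-2}$ on the left and $q^{-3}$ on the right aligns the exponents correctly before the final relabeling $q^3\mapsto q$.
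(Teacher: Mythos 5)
Your proposal is correct and follows essentially the same route as the paper: induct on $\alpha$, take \eqref{3a32} as the base case, multiply the inductive hypothesis by $q^{-2}$, apply $H$ using the preliminary identity $H\bigl(q^i(f_3f_6/f_1f_2)^{4i}\bigr)=\sum_{j=1}^{3i}c_{i,j}q^{3j}(f_9f_{18}/f_3f_6)^{4j}$, interchange sums to read off $\mathbf{Y}_{\alpha+1}=\mathbf{Y}_{\alpha}C$, and relabel $q^3\mapsto q$. Your observation that the exponent $4i$ keeps the shape stable under $H$ (so no alternation between two matrices as in the $a_3$ case) is exactly why the paper's proof here is a single, non-alternating induction.
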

\begin{proof}
The identity \eqref{3a32} is the $\alpha=0$ case of \eqref{3a9}.\newline
Suppose \eqref{3a9} holds for some $\alpha\geq 0$. Then
\begin{equation}
\sum_{n\ge0}a_9\left(3^{\alpha+1}n+3^{\alpha+1}-1\right )q^n
=\sum_{i\geq1}y_{\alpha,i}q^{i-1}\left(\frac{f_3f_6}{f_1f_2}\right)^{4i},
\end{equation}
which is equivalent to
\begin{equation}\label{3p9}
\sum_{n\ge0}a_9\left(3^{\alpha+1}n+3^{\alpha+1}-1\right )q^{n-2}
=q^{-3}\sum_{i\geq1}y_{\alpha,i}q^i\left(\frac{f_3f_6}{f_1f_2}\right)^{4i}.
\end{equation}
Applying the operator $H$ to \eqref{3p9}, we find that
\begin{align*}
\sum_{n\ge0}a_9\left(3^{\alpha+1}(3n+2)+3^{\alpha+1}-1\right)q^{3n}
&=q^{-3}\sum_{i\geq1}y_{\alpha,i}H\left(q^i\left(\frac{f_3f_6}{f_1f_2}\right)^{4i}\right)\\
&=\sum_{i\geq1}y_{\alpha,i}\sum\limits_{j=1}^{3i}c_{i,j}q^{3j-3}\left(\frac{f_9f_{18}}{f_3f_6}\right)^{4j}\\
&=\sum\limits_{j\geq1}\left(\sum_{i\geq1}y_{\alpha,i}c_{i,j}\right)q^{3j-3}\left(\frac{f_9f_{18}}{f_3f_6}\right)^{4j}\\
&=\sum\limits_{j\geq1}y_{\alpha+1,j}q^{3j-3}\left(\frac{f_9f_{18}}{f_3f_6}\right)^{4j},
\end{align*}
which implies that
\begin{equation}
\sum_{n\ge0}a_9\left(3^{\alpha+2}n+3^{\alpha+2}-1\right )q^n=\sum\limits_{j\geq1}y_{\alpha+1,j}q^{j-1}\left(\frac{f_3f_6}{f_1f_2}\right)^{4j},
\end{equation}
which is \eqref{3a9} with $\alpha+1$ for $\alpha$ .
\end{proof}

\section{Congruences}
Let $\nu(N)$ be the largest power of 3 that divides $N$. Note that $\nu(0)=+\infty$.

\noindent\textbf{Proof of the Theorem \ref{th1}.}
It follows from \eqref{matrix} and \eqref{mij} that
\begin{equation}\label{numij}
\nu(m_{i.j})\geq 3j-i-1,
\end{equation}
and then follows from \eqref{aij}, \eqref{bij} and \eqref{numij} that
\begin{equation}\label{nuaij}
\nu(a_{i.j})\geq 3(i+j-1)-(4i-3)-1=3j-i-1
\end{equation}
and
\begin{equation}\label{nubij}
\nu(b_{i.j})\geq 3(i+j-1)-(4i-1)-1=3j-i-3.
\end{equation}
It not hard to show that
\begin{equation}\label{nux1}
\nu(x_{2\alpha,j})\geq \alpha+3j-4
\end{equation}
and
\begin{equation}\label{nux2}
\nu(x_{2\alpha+1,j})\geq \alpha+1+3(j-1).
\end{equation}
The identity \eqref{nux1} is true for $\alpha=0$, by \eqref{x0}.\newline
Suppose \eqref{nux1} is true for some $\alpha\geq0$. Then
\begin{align*}
\nu(x_{2\alpha+1,j})&\geq \underset{i\geq 1}{\text{min}}\left(\nu(x_{2\alpha,i})+\nu(a_{i,j})\right)\\
&=\nu(x_{2\alpha,1})+\nu(a_{1,j})\\
&\geq \alpha+3j-2\\
&\geq \alpha+1+3(j-1),
\end{align*}
which is \eqref{nux2}.

Now suppose \eqref{nux2} is true for all $\alpha\geq0$. Then
\begin{align*}
\nu(x_{2\alpha+2,j})&\geq \underset{i\geq 1}{\text{min}}\left(\nu(x_{2\alpha+1,i})+\nu(b_{i,j})\right)\\
&=\nu(x_{2\alpha+1,1})+\nu(b_{1,j})\\
&\geq \alpha+1+3j-4,
\end{align*}
which is \eqref{nux1} with $\alpha+1$ in place of $\alpha$. This completes the proof of \eqref{nux1} and \eqref{nux2}  by induction.

The congruence \eqref{3ac1} follows from \eqref{3p33} together with \eqref{nux1}, and the congruence \eqref{3ac2} follows from \eqref{3p34} together with \eqref{nux2}.

It follows from \eqref{3p34} and \eqref{nux2} that
\begin{equation}\label{3c1}
\sum\limits_{n\geq 0}a_3\left(3^{2\alpha+1}n+\frac{3^{2\alpha+2}-1}{4}\right)q^n\equiv 3^{\alpha+1}\frac{f_3^3f_6^3}{f_1^3f_2^3}\pmod{3^{\alpha+4}}.
\end{equation}
By the binomial theorem, it is easy to see that
\begin{equation}\label{fp3}
f_1^3\equiv f_3\pmod{3}.
\end{equation}
In view of \eqref{fp3}, the congruence \eqref{3c1} can be expressed as
\begin{align}
\sum\limits_{n\geq 0}a_3\left(3^{2\alpha+1}n+\frac{3^{2\alpha+2}-1}{4}\right)q^n\equiv 3^{\alpha+1}\frac{f_9f_{18}}{f_3f_6}\pmod{3^{\alpha+2}}.\label{3c31}
\end{align}
Equating the coefficients of $q^{3n+1}$ and $q^{3n+2}$ in \eqref{3c31}, we obtain \eqref{3ac3} and \eqref{3ac4}, respectively.

\vspace{0.5cm}
\noindent\textbf{Proof of the Theorem \ref{th2}.} It follows from \eqref{cij} and \eqref{numij} that
\begin{equation}\label{nucij}
\nu(c_{i.j})\geq 3(i+j)-4i-1=3j-i-1.
\end{equation}
It not hard to show that
\begin{equation}\label{nuy1}
\nu(y_{\alpha,j})\geq \alpha+1+3(j-1).
\end{equation}
The identity \eqref{nuy1} is true for $\alpha=0$, by \eqref{y0}.\newline
Suppose \eqref{nuy1} is true for some $\alpha\geq0$. Then
\begin{align*}
\nu(y_{\alpha+1,j})&\geq \underset{i\geq 1}{\text{min}}\left(\nu(y_{\alpha,i})+\nu(c_{i,j})\right)\\
&=\nu(y_{\alpha,1})+\nu(c_{1,j})\\
&\geq \alpha+1+3j-2\\
&\geq \alpha+2+3(j-1),
\end{align*}
which is \eqref{nuy1} with $\alpha+1$ for $\alpha$.

The congruence \eqref{9ac} follows from \eqref{3a9} together with \eqref{nuy1}.


\end{document}